\theoremstyle{definition}
\newtheorem{dfn}{Definition}[section]
\newtheorem{thm}[dfn]{Theorem}
\newtheorem{prop}[dfn]{Proposition}
\newtheorem{lem}[dfn]{Lemma}
\newtheorem{cor}[dfn]{Corollary}
\newtheorem{rem}[dfn]{Remark}
\renewcommand{\labelenumi}{(\theenumi)}
\numberwithin{equation}{section}
\begin{document}
\title{On the Gromov--Hausdorff stability of metric viscosity solutions}
\author{Shimpei Makida} \thanks{Department of Mathematics, Graduate School of Science, Hokkaido University, North 10, West 8, Kita-Ku, Sapporo 060-0810, JAPAN}
\begin{abstract}
We establish the stability of metric viscosity solutions to first–order Hamilton--Jacobi equations under Gromov--Hausdorff convergence.
Our proof combines a characterization of metric viscosity solutions via quadratic distance functions with a doubling variable method adapted to $\epsilon$-isometries, which allows us to pass to the Gromov--Hausdorff limit without embedding the spaces into a common ambient space. 
As a byproduct, we give a PDE-based proof of the stability of the dual Kantorovich problems under measured-Gromov--Hausdorff convergence.
\end{abstract}
\maketitle
\section{Introduction}
Hamilton--Jacobi equations play a pivotal role in interfacial science and optimal control. From the perspective of PDE, the well-posedness and qualitative properties of their viscosity solutions have been studied intensively since the 1970s. In recent decades, this line of inquiry has been extended beyond Euclidean spaces to a wide range of non-Euclidean and metric settings \cite{AF15,CL85,GHN15,GS14,GS15,LSZ25}.
Notably, applications of Hamilton–Jacobi equations on metric spaces have attracted considerable attention; examples include the construction of viscosity solutions via graph approximations of fractals \cite{CCM16}, domain-perturbation problems \cite{M23,MN23}, and an application to a large-deviation principle \cite{GTT22}.

Our main aim is to study the Gromov--Hausdorff stability of metric viscosity solutions, which is a natural generalization of the aforementioned stability results for domain perturbations \cite{M23,MN23}. 
Parallel to these developments within PDE, such stability has also been extensively studied in broader mathematical contexts, motivated in particular by advances in metric geometry.
In \cite{GMS15}, Gigli, Mondino, and Savaré studied the stability of heat flows under pointed-measured-Gromov--Hausdorff convergence. 
In \cite{H2018}, Honda proved the stability of solutions to elliptic PDEs under Gromov--Hausdorff convergence.
The present work furnishes a first-order analogue, establishing Gromov--Hausdorff stability for metric viscosity solutions of Hamilton–Jacobi equations.

Let $n \in \mathbb{N}$. Suppose that $(X_n, d_n)$ and $(X_\infty, d_\infty)$ are geodesic spaces. We assume
$$
X_n \to X_{\infty}\quad
\text{in Gromov--Hausdorff sense}.
$$
We recall that $X_{n}$ converges to $X_{\infty}$ in Gromov--Hausdorff sense if there exists an $\epsilon_{n}$-isometry $f_{n}:X_{n} \to X_{\infty}$ with $\epsilon_{n} \to 0$ as $n \to \infty$. See Definition \ref{e-iso}.

We consider the Hamilton--Jacobi equation on $X_{n}$
\begin{align}
    \partial_{t}u^n(t,x)+H_n\bigl(x,|\nabla u^n(t,x)|\bigr)&=0,\quad (t,x) \in (0,\infty) \times X_{n}, \label{e:ghj1}\\
    u^n(0,x)&=g^n(x),\quad x \in X_{n}, \label{e:ghj2}
\end{align}
where the Hamiltonian $H_n: X_n \times [0,\infty) \to \mathbb{R}$ is continuous and the initial function $g^n:X_n \to \mathbb{R}$ is continuous.
We also consider the Hamilton--Jacobi equation on $X_{\infty}$
\begin{align}
\partial_t u^\infty(t,x) 
  + H_\infty\bigl(x,|\nabla u^\infty(t,x)|\bigr)
  &= 0, \quad (t,x) \in (0,\infty)\times X_\infty,
  \label{e:limhj1}\\
u^\infty(0,x)
  &= g^\infty(x), \quad x \in X_\infty,
  \label{e:limhj2}
\end{align}
where the Hamiltonian $H_\infty: X_\infty \times [0,\infty) \to \mathbb{R}$ is continuous and the initial function $g^\infty:X_\infty \to \mathbb{R}$ is continuous.

For the stability, we assume the following conditions.
To handle metric viscosity solutions, we extend the domain of Hamiltonian $H_n$ and $H_\infty$ to $X_n \times \mathbb{R}$ and $X_\infty \times \mathbb{R}$, respectively, by constant extension. Precisely, for
\(x \in X_n\), \(y \in X_\infty\), and for \(p<0\), we set
$
    H_n(x,p)      := H_n(x,0), 
    \quad
    H_\infty(y,p) := H_\infty(y,0).
$
\begin{enumerate}
\renewcommand{\labelenumi}{(S1)}
\renewcommand{\theenumi}{(S1)}
\item 
\label{i:S1} ({Gromov--Hausdorff convergence})
    $X_{n} \to X_{\infty}$ in Gromov--Hausdorff sense. In other words,
there exists an $\epsilon_n$-isometry $f_n: X_n \to X_\infty$ with $\epsilon_n \to 0$ as $n \to \infty$.
\renewcommand{\labelenumi}{(S2)}
\renewcommand{\theenumi}{(S2)}
\item 
\label{i:S2} ({Monotonicity of $H_n$ in the second variable})
    Let $x \in X_n$.
    For $p, q \in \mathbb{R}$ with $p\le q$, the inequality
    $$
    H_n(x,p)\le H_n(x,q)
    $$
    holds.
\renewcommand{\labelenumi}{(S3)}
\renewcommand{\theenumi}{(S3)}
\item 
\label{i:S3} ({Uniform convergence of $H_n$}) For $x \in X_\infty$, $p \in \mathbb{R}$, and $x_{n} \in X_n$, $p_n \in \mathbb{R}$ such that $f_n(x_n) \to x$ in $X_\infty$ and $p_{n} \to p$ in $\mathbb{R}$ as $n \to \infty$, the convergence
    $$
    H_n(x_n,p_n) \to H_\infty(x,p)
    $$
    holds.
\renewcommand{\labelenumi}{(S4)}
\renewcommand{\theenumi}{(S4)}
\item 
\label{i:S4} ({Uniform boundedness of $u^n$}) For $T_1, T_2 \in (0,\infty)$ with $T_1 < T_2$, there exists a positive constant $M$ independent of $n$ such that, for $s \in [T_1, T_2]$ and $x \in X_n$, the inequality
    $$
    |u^n(s,x)|\le M
    $$ holds.
\renewcommand{\labelenumi}{(S5)}
\renewcommand{\theenumi}{(S5)}
\item 
\label{i:S5} ({Equi-continuity and pointwise convergence of $g^n$})
There exists a modulus of continuity $\eta$ such that, for $x,y \in X_n$, the inequality 
    $$
    |g^n(x)-g^n(y)|\le \eta\bigl(d_n(x,y)\bigr)
    $$ holds.
Furthermore, for $x \in X_\infty$, $g^n\bigl(f^{'}_{n}(x)\bigr)$ converges to $g^\infty(x)$ as $n \to \infty$, where $f^{'}_{n}: X_{\infty} \to X_n$ is an $\epsilon_n$-inverse of $f_n$. See Definition \ref{e-inv}.
\end{enumerate}

Under these assumptions, we can prove the Gromov--Hausdorff stability of metric viscosity solutions. 
\begin{thm}[Stability under Gromov--Hausdorff convergence]\label{stability1}
Let $(X_n, d_n)$ and $(X_\infty, d_\infty)$ be locally compact geodesic spaces; that is, geodesic spaces in which every closed ball is compact.
Let $u^{n}$ be a metric viscosity subsolution (resp. supersolution) of (\ref{e:ghj1}), (\ref{e:ghj2}).
Suppose that \ref{i:S1}--\ref{i:S5} hold and, for $x \in X_\infty$, the inequality (continuity at the initial time of $u^n$) holds:
$$
    \sup_{(t_n,x_n)}\limsup_{n \to \infty}(u^n)^{\ast}(t_n,x_n)\le\sup_{(x_n)}\limsup_{n \to \infty}(u^n)^{\ast}(0,x_n) 
    $$ 
    $$
    \left(\text{resp.}\; \inf_{(t_n,x_n)}\liminf_{n \to \infty}(u^n)_{\ast}(t_n,x_n)\ge\inf_{(x_n)}\liminf_{n \to \infty}(u^n)_{\ast}(0,x_n)\right)
    $$
    where $t_n \in [0,\infty)$ satisfies $t_n \to 0$ in $[0,\infty)$ and $x_n \in X_n$ satisfies $f_n(x_n) \to x$ in $X_\infty$ as $n \to \infty$.
    Here, $(u^n)^{\ast}$ and $(u^n)_{\ast}$ denote the upper and lower semicontinuous envelopes of $u^n$, respectively.
Define the limit function $u^{\infty}(t, x)$ $(\text{resp.}\; u_{\infty}(t, x))$ by
$$
 u^{\infty}(t, x):=
 \sup_{(t_n,x_n)} \limsup_{n \to \infty} (u^{n})^{\ast}(t_n, x_{n}), \quad (t,x) \in [0,\infty) \times X_{\infty}.
$$
$$
\left(\text{resp.}\;u_{\infty}(t, x):=
\inf_{(t_n, x_n)} \liminf_{n \to \infty} (u^{n})_{\ast}(t_n, x_{n}), \quad (t,x) \in [0,\infty) \times X_{\infty}.
\right)
$$
We also assume that $u^{\infty}$ $(\text{resp.}\; u_{\infty})$ is an upper (resp. lower) semicontinuous function on $X_\infty$.
Then, the function $u^{\infty}$ $(\text{resp.}\; u_{\infty})$ 
is a metric viscosity subsolution (resp. supersolution) of (\ref{e:limhj1}), (\ref{e:limhj2}).
In the definition of $u^{\infty}$ ($\text{resp.}\;u_{\infty}$), the supremum (resp. infimum) is taken over all sequences $t_n \in [0,\infty)$ and $x_n \in X_n$ such that $t_n \to t$ in $[0,\infty)$ and $f_n(x_n) \to x$ in $X_\infty$ as $n \to \infty$.
\end{thm}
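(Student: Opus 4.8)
The plan is to run the Barles--Perthame half-relaxed-limit argument, modified in two ways forced by the setting: test functions are drawn from the class of quadratic distance functions supplied by the characterization of metric viscosity solutions, and the usual single-space doubling of variables is replaced by a doubling that couples a point of $X_n$ with a point of $X_\infty$ through the $\epsilon_n$-isometry $f_n$, so the spaces never need to sit inside a common ambient space. I treat the subsolution case; the supersolution case follows by the symmetric modifications, with \ref{i:S2} and the constant extension of the Hamiltonians to $p<0$ entering identically in both. By the quadratic-distance characterization it suffices to prove: whenever $\phi\in C^1((0,\infty))$, $z_1,\dots,z_k\in X_\infty$, $\lambda_1,\dots,\lambda_k>0$, and $(t,y)\mapsto u^\infty(t,y)-\phi(t)-\sum_i\lambda_i d_\infty(y,z_i)^2$ has a local maximum at $(\hat t,\hat y)\in(0,\infty)\times X_\infty$ with $\hat y\notin\{z_1,\dots,z_k\}$, then
$$
\phi'(\hat t)+H_\infty\!\Bigl(\hat y,\ \bigl|\nabla_y\bigl(\textstyle\sum_i\lambda_i d_\infty(\cdot,z_i)^2\bigr)\bigr|(\hat y)\Bigr)\le 0 .
$$
Adding $|t-\hat t|^2+\lambda_0 d_\infty(y,\hat y)^2$ to the subtracted test function, we may assume the maximum is strict and global over a fixed closed ball; here the upper semicontinuity of $u^\infty$ and the properness of $X_\infty$ are used.

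For $\alpha>0$ and each $n$ consider, over a fixed compact product $[\hat t/2,2\hat t]\times\overline B_n\times\overline B_\infty$ of closed balls $\overline B_n\subset X_n$, $\overline B_\infty\subset X_\infty$, the functional
$$
\Phi_{n,\alpha}(t,x,y):=(u^n)^{\ast}(t,x)-\phi(t)-|t-\hat t|^2-\lambda_0 d_\infty(y,\hat y)^2-\sum_{i=1}^k\lambda_i d_\infty(y,z_i)^2-\tfrac1\alpha\, d_\infty\!\bigl(f_n(x),y\bigr)^2 ,
$$
whose last term is the doubling device adapted to $\epsilon$-isometries. A maximizer $(t_{n,\alpha},x_{n,\alpha},y_{n,\alpha})$ exists by \ref{i:S4}, the upper semicontinuity of $(u^n)^{\ast}$ and compactness (one passes to an approximate maximizer if $f_n$ is not continuous). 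Comparing $\Phi_{n,\alpha}$ at the maximizer with its value along an admissible sequence realizing $u^\infty(\hat t,\hat y)$, and using the bound $|d_\infty(f_n(x),w)-d_n(x,f'_n(w))|\le C\epsilon_n$ from Definitions~\ref{e-iso} and \ref{e-inv}, the standard penalization estimate shows that, letting first $n\to\infty$ with $\alpha$ fixed and then $\alpha\to 0$,
$$
\tfrac1\alpha\, d_\infty\!\bigl(f_n(x_{n,\alpha}),y_{n,\alpha}\bigr)^2\to 0,\quad t_{n,\alpha}\to\hat t,\quad y_{n,\alpha}\to\hat y,\quad f_n(x_{n,\alpha})\to\hat y,\quad (u^n)^{\ast}(t_{n,\alpha},x_{n,\alpha})\to u^\infty(\hat t,\hat y),
$$
where the strictness of the maximum and \ref{i:S3}, \ref{i:S5} are used to identify the limits; in particular the maximizer is interior for $n$ large, so (\ref{e:ghj1}) is available at $(t_{n,\alpha},x_{n,\alpha})$.

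Freezing the $X_\infty$-variable at $y_{n,\alpha}$, the map $x\mapsto\tfrac1\alpha d_\infty(f_n(x),y_{n,\alpha})^2$ is, up to an $O(\epsilon_n)$ perturbation, $\tfrac1\alpha d_n(x,f'_n(y_{n,\alpha}))^2$, an admissible quadratic distance test function on $X_n$; the metric viscosity subsolution property of $u^n$ at $(t_{n,\alpha},x_{n,\alpha})$ thus gives $\phi'(t_{n,\alpha})+2(t_{n,\alpha}-\hat t)+H_n(x_{n,\alpha},p_{n,\alpha})\le 0$, with $p_{n,\alpha}$ the metric slope of that test function at $x_{n,\alpha}$. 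The optimality of $y_{n,\alpha}$ in $\Phi_{n,\alpha}$, i.e.\ that $y\mapsto\lambda_0 d_\infty(y,\hat y)^2+\sum_i\lambda_i d_\infty(y,z_i)^2+\tfrac1\alpha d_\infty(f_n(x_{n,\alpha}),y)^2$ has a local minimum there, together with the slope identities for quadratic distance functions in geodesic spaces ($|\nabla d_\infty(\cdot,y)^2|=2d_\infty(\cdot,y)$) and the $\epsilon_n$-isometry bounds, shows $p_{n,\alpha}=\bigl|\nabla_y\bigl(\lambda_0 d_\infty(\cdot,\hat y)^2+\sum_i\lambda_i d_\infty(\cdot,z_i)^2\bigr)\bigr|(y_{n,\alpha})+o(1)$ as $n\to\infty$. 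These momenta being nonnegative, \ref{i:S2} (with the constant extension covering any negative excursion) lets us replace $p_{n,\alpha}$ by that value; letting $n\to\infty$ then $\alpha\to 0$, and using \ref{i:S3}, $f_n(x_{n,\alpha})\to\hat y$, $y_{n,\alpha}\to\hat y$, the continuity of the slope of $\sum_i\lambda_i d_\infty(\cdot,z_i)^2$ near $\hat y$ (recall $\hat y\notin\{z_i\}$) and the vanishing at $\hat y$ of the slope of $\lambda_0 d_\infty(\cdot,\hat y)^2$, we arrive at the claimed inequality. Finally, $u^\infty(0,\cdot)\le g^\infty$ is exactly the displayed continuity-at-initial-time hypothesis combined with \ref{i:S5}: for admissible $t_n\to 0$, $f_n(x_n)\to x$, that hypothesis bounds $\limsup_n(u^n)^{\ast}(t_n,x_n)$ by $\sup_{(x_n)}\limsup_n g^n(x_n)$, which the $\eta$-equicontinuity of $g^n$ together with $d_n(x_n,f'_n(x))\to 0$ and $g^n(f'_n(x))\to g^\infty(x)$ force to be $\le g^\infty(x)$.

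The main obstacle is the doubling bookkeeping carried out without a common ambient space: one must check simultaneously that the coupled term $\tfrac1\alpha d_\infty(f_n(x),y)^2$ is an admissible quadratic distance test function on $X_n$ up to errors that are $o(1)$ in the slopes fed into $H_n$ (which forces one to weigh $\epsilon_n$ against the doubling parameter $\alpha$ and to order the two limits correctly), and that it acts as a genuine penalization whose maximizers localize at $(\hat t,\hat y)$. Extracting the maximizers at all requires the properness of the geodesic spaces and \ref{i:S4}; passing the $\epsilon_n$-errors through the Hamiltonians without loss is precisely where \ref{i:S3} is used, with \ref{i:S2} and the constant extension to negative momenta absorbing the sign of the unavoidable one-sided errors. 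The underlying technical input, established alongside the quadratic-distance characterization, is the stability of the metric slope of quadratic distance functions along $\epsilon_n$-isometries of geodesic spaces.
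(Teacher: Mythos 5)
Your overall architecture (half-relaxed limits, the quadratic-distance characterization of Proposition \ref{equaivalence1}, and an adaptation of the localization to $\epsilon_n$-isometries) matches the paper's, but your implementation of the localization is genuinely different, and it is precisely there that the argument has a gap. You double variables across the two spaces with a coupling term $\tfrac1\alpha d_\infty(f_n(x),y)^2$ and a penalization parameter $\tfrac1\alpha$ that must blow up. The subsolution inequality for $u^n$ then involves the momentum $p_{n,\alpha}\approx\tfrac2\alpha d_n\bigl(x_{n,\alpha},f_n'(y_{n,\alpha})\bigr)$, and since $H_n$ is nondecreasing in $p$ by \ref{i:S2}, to pass to the limit you need a \emph{lower} bound $p_{n,\alpha}\ge \bigl|\nabla\bigl(\sum_i\lambda_i d_\infty(\cdot,z_i)^2\bigr)\bigr|(y_{n,\alpha})-o(1)$. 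You assert this as an equality "by the optimality of $y_{n,\alpha}$," but in a geodesic space there is no first-order condition, only one-sided slope inequalities along geodesics; the penalization lemma only gives $\tfrac1\alpha d_\infty(f_n(x_{n,\alpha}),y_{n,\alpha})^2\to0$, which is compatible with $\tfrac2\alpha d_\infty(f_n(x_{n,\alpha}),y_{n,\alpha})$ blowing up like $\alpha^{-1/2}$ or undershooting the target slope. For a single quadratic ($k=1$) one can salvage a lower bound by differentiating along the geodesic from $y_{n,\alpha}$ toward $z_1$, but for the sum $\sum_i\lambda_i d_\infty(\cdot,z_i)^2$ with $k\ge2$ (which you allow, and which is not the form of Proposition \ref{equaivalence1} — the paper's characterization uses a single $\tfrac k2 d(\hat a,\cdot)^2$ with no restriction $\hat y\notin\{z_i\}$) there is in general no common descent direction, so the slope of the sum is not recovered and the identification fails. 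A secondary, related issue: $x\mapsto\tfrac1\alpha d_\infty(f_n(x),y)^2$ is not an admissible test function on $X_n$ ($f_n$ need not even be continuous), and replacing it by $\tfrac1\alpha d_n(x,f_n'(y))^2$ "up to $O(\epsilon_n)$" shifts the maximizer; you would have to re-maximize with the genuine test function and redo the estimates at the new point.

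The paper avoids all of this by never introducing a coupled second variable or a blowing-up parameter. It pulls the vertex $a_\infty$ back to $a_n\in X_n$, takes a recovery sequence $(t_n,x_n)$ realizing $u^\infty(t_\infty,x_\infty)$, and maximizes
$(u^n)^*(t,x)-\phi(t)-\tfrac k2 d_n(a_n,x)^2-\tfrac l2 d_n(x_n,x)^2-\tfrac l2|t_n-t|^2$
with a \emph{fixed} (large) $l$; the localizing term sits in the $\phi_2$ slot of the test pair, so the error enters $H_n$ only through $(H_n)_{l d_n(\hat x_n,x_n)}$, which by \ref{i:S2} equals $H_n\bigl(\hat x_n, k d_n(a_n,\hat x_n)-l d_n(\hat x_n,x_n)\bigr)$, and $l d_n(\hat x_n,x_n)\to0$ because $l$ is fixed while $d_n(\hat x_n,x_n)\to0$. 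This sidesteps the momentum-identification problem entirely and is the step you would need to replace your doubling device with (or else restrict to $k=1$ and supply the one-sided slope estimate). Your treatment of the initial condition via \ref{i:S5} and the continuity-at-initial-time hypothesis is essentially the paper's and is fine.
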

Note that if we assume \ref{i:S1}--\ref{i:S4} and the upper (resp. lower) semicontinuity of $u^\infty$ (resp. $u_\infty$), we can prove the stability of metric viscosity subsolutions (resp. supersolutions) to (\ref{e:ghj1}).

We outline the strategy of the proof. First, by using the characterization of metric viscosity solutions via quadratic distance functions \cite{MN23}, we show that $u^\infty$ is a metric viscosity subsolution of (\ref{e:limhj1}).
See Proposition \ref{equaivalence1}.
Next, we demonstrate that $u^\infty$ satisfies the initial condition (\ref{e:limhj2}) using \ref{i:S5}.
A similar argument shows that $u_\infty$ is a metric viscosity supersolution of (\ref{e:limhj1}), (\ref{e:limhj2}).
Our strategy is a natural adaptation of the approach used to prove the stability under Hausdorff convergence \cite{M23,MN23}.

As an application of the stability of metric viscosity solutions, we give a PDE-based proof of the convergence of maximizers of the dual Kantorovich problems under measured-Gromov--Hausdorff convergence.
The dual Kantorovich problem includes the Hopf--Lax semigroup, which can be identified as the unique metric viscosity solution of (\ref{e:ghj1}), (\ref{e:ghj2}) with a specific Hamiltonian. Therefore, we can apply our stability results to the stability of maximizers.

Let $\mu_\infty, \nu_\infty \in P_2(X_\infty)$ and $\mu_n, \nu_n \in P_2(X_n)$.
Here, $P_2(X_\infty)$ denotes the set of probability measures on \( X_\infty \) with finite second moments.
We consider the dual Kantorovich problem on $X_n$
\begin{equation}\tag{{KAN}$_n$}\label{kann} \hspace{2pt}
\max_{\phi \in L^{1}(\mu_{n})}\left(\int_{X_n}Q_{1} \phi \,d\nu_n
-\int_{X_n} \phi \,d\mu_n \right)
\end{equation}
and the dual Kantorovich problem on $X_\infty$
\begin{equation}\tag{KAN$_{\infty}$}\label{kani} 
\max_{\phi \in L^{1}(\mu_{\infty})}\left(\int_{X_{\infty}}Q_{1} \phi \,d\nu_{\infty}
-\int_{X_{\infty}} \phi \,d\mu_{\infty} \right),
\end{equation}
where $Q_1 \phi$ denotes the Hopf--Lax semigroup applied to $\phi$ at time $1$.
See Subsection 2.3 for a more detailed explanation of the Kantorovich problem.
\begin{prop}
[{%
  \cite[Exercise 28.12]{V09}; see also the claim in the proof of
  Theorem 1.1 in \cite{V08}%
}]
\label{stakan}
Let $(X_n, d_n)$ and $(X_\infty, d_\infty)$ be compact geodesic spaces.
Assume \ref{i:S1}.
In other words, there exists an $\epsilon_n$-isometry $f_n$ with $\epsilon_n \to 0$ as $n \to \infty$.
Suppose further that $f_n$ is a Borel map and the pushforwards of measures $(f_n)_{\#}\mu_n$, $(f_n)_{\#}\nu_n$ satisfy
$$
(f_n)_{\#}\mu_n \to \mu_{\infty},\quad(f_n)_{\#}\nu_n \to \nu_{\infty}
$$ in $2$-Wasserstein distance $W_2$ as $n \to \infty$.
Let $\phi^n$ be a maximizer of (\ref{kann}).
Then, there exists a constant $c^n \in \mathbb{R}$ such that a maximizer $\phi^n-c^n$ of (\ref{kann}) converges uniformly to some maximizer $\overline{\phi}$ of (\ref{kani}) up to a subsequence. Equivalently, for $x \in X_{\infty}$, we have
$$
\lim_{n \to \infty} \left(\phi^{n}(x_{n})-c^n\right)= \overline{\phi}(x),
$$
where $x_n \in X_n$ satisfies $f_n(x_n) \to x$ in $X_\infty$ as $n \to \infty$.
\end{prop}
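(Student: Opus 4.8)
The plan is to realize the dual Kantorovich functional as the value at time $t=1$ of a Hopf--Lax flow and then invoke Theorem \ref{stability1}. First I would recall that, for $\phi \in L^1(\mu_n)$, the function $Q_t \phi$ solves the Hamilton--Jacobi equation \eqref{e:ghj1}, \eqref{e:ghj2} on $X_n$ with the specific Hamiltonian $H_n(x,p)=\tfrac12 p^2$ (independent of $x$) and initial datum $g^n=\phi$; in particular $Q_1\phi$ is the unique metric viscosity solution at time $1$. Since a maximizer $\phi^n$ of \eqref{kann} is only determined up to an additive constant, I would normalize by setting $c^n$ so that, say, $\int_{X_n}(\phi^n - c^n)\,d\mu_n = 0$, or alternatively so that $\min_{X_n}(\phi^n-c^n)=0$; either choice keeps the functional value unchanged because $\nu_n$ and $\mu_n$ are probability measures and $Q_1(\phi-c)=Q_1\phi - c$.

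The key step is to obtain \emph{equicontinuity and uniform boundedness} of the normalized maximizers, so that assumptions \ref{i:S4} and \ref{i:S5} apply. Here I would use the standard fact from optimal transport that any dual maximizer can be taken to be $d^2/2$-concave, hence $Q_1Q_1^{-}$-type regularity gives a uniform modulus of continuity: on a compact geodesic space of diameter $D$, a $c$-concave function for the cost $c=d^2/2$ is Lipschitz with constant controlled by $D$. Combined with the normalization $\min(\phi^n-c^n)=0$ this yields a uniform bound $|\phi^n-c^n|\le D^2/2$ on $X_n$, independent of $n$; the same modulus of continuity bound is inherited by $Q_t(\phi^n-c^n)$ for all $t\in[0,1]$, which gives \ref{i:S4}. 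For \ref{i:S5} I would need pointwise convergence of $g^n = \phi^n-c^n$ along $f'_n$; this is not automatic, so I would instead pass first to a subsequence along which $\phi^n-c^n$ converges uniformly (after transporting via $f_n$, $f'_n$) using Arzelà--Ascoli — the uniform modulus of continuity and uniform bound make the family precompact in the Gromov--Hausdorff sense. Call the limit $\overline\phi$, which is then automatically continuous on $X_\infty$.

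With the subsequence fixed, assumptions \ref{i:S1}--\ref{i:S5} and the continuity-at-initial-time hypothesis of Theorem \ref{stability1} hold (the Hamiltonians $H_n=H_\infty=\tfrac12p^2$ trivially satisfy \ref{i:S2}, \ref{i:S3}; the extension by constants for $p<0$ is exactly the convention in the excerpt), so the theorem applied to both subsolutions and supersolutions gives that the limit of $Q_t(\phi^n-c^n)$ is the unique metric viscosity solution on $X_\infty$ with initial datum $\overline\phi$, namely $Q_t\overline\phi$. Evaluating at $t=1$ and passing to the limit in the integrals — here I would use the Wasserstein convergence $(f_n)_\#\mu_n\to\mu_\infty$, $(f_n)_\#\nu_n\to\nu_\infty$ together with uniform convergence of integrands, which suffices since $W_2$-convergence on compact spaces implies weak convergence — gives
$$
\int_{X_n} Q_1(\phi^n-c^n)\,d\nu_n - \int_{X_n}(\phi^n-c^n)\,d\mu_n
\;\longrightarrow\;
\int_{X_\infty} Q_1\overline\phi\,d\nu_\infty - \int_{X_\infty}\overline\phi\,d\mu_\infty .
$$
Finally I would check optimality of $\overline\phi$: by lower semicontinuity of the dual value under $W_2$-convergence (Kantorovich duality equals the transport cost, which is continuous under $W_2$-convergence of the marginals on compact spaces), the left-hand side converges to the optimal value of \eqref{kani}, so $\overline\phi$ is a maximizer of \eqref{kani}.

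The main obstacle I anticipate is the equicontinuity/compactness step: verifying that the normalized maximizers $\phi^n-c^n$, living on different spaces $X_n$, form a precompact family whose limit can be tested against Theorem \ref{stability1}. This requires carefully transferring the uniform modulus of continuity through the $\epsilon_n$-isometries $f_n$ and $\epsilon_n$-inverses $f'_n$ — the usual $\epsilon_n$-distortion terms must be shown to be harmless in the limit — and then checking that the pointwise limit $\overline\phi$ is genuinely independent of the choice of approximating sequence $x_n$, which is precisely the well-definedness built into the statement of Theorem \ref{stability1} and the reason that theorem is the right tool here.
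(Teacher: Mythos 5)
Your proposal is correct and follows essentially the same route as the paper: normalize the $d^2/2$-convex maximizers by a constant $c^n$ to obtain an equi-Lipschitz, uniformly bounded family (the Lipschitz constant controlled by the uniformly bounded diameters of $X_n$), extract a uniform subsequential limit $\overline{\phi}$ via Arzel\`a--Ascoli after transporting through $f_n'$, use the Gromov--Hausdorff stability of the Hopf--Lax semigroup (Theorem \ref{stability1} via Lemma \ref{important}) to pass to the limit in $Q_1(\phi^n-c^n)$, and combine the $W_2$-convergence of the pushforward marginals with the convergence of the dual values $\tfrac12 W_2(\mu_n,\nu_n)^2$ to conclude that $\overline{\phi}$ is a maximizer of \eqref{kani}. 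The only difference is the choice of normalization ($\min$ or mean zero versus the paper's $c^n=\phi^n(f_n'(z))$ for a fixed $z\in X_\infty$), which is immaterial.
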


The proof is carried out according to the following strategy. 
First, we show that, for each \( n \), there exists a real number \( c^n \) such that $\phi^n - c^n$
is equi-Lipschitz in $n$ and uniformly bounded. 
Then, by applying the Ascoli--Arzelà theorem, we deduce the existence of a uniformly convergent limit \( \overline{\phi} \). 
Up to this step, the approach is the same as that used in previous study \cite{V08}. 
However, instead of applying the Ascoli--Arzelà theorem directly to \(Q_1(\phi^n - c^n)\) as in the previous work, we deduce the uniform convergence of \(Q_1(\phi^n - c^n)\) from the stability of metric viscosity solutions.  
Using the uniform convergence of these functions, we verify that the limit function \(\overline{\phi}\) is a maximizer of the dual Kantorovich problem on \(X_\infty\).

The remainder of this paper is organized as follows. 
In Section 2, we give a short review of metric viscosity solutions, Gromov--Hausdorff convergence, and the dual Kantorovich problem. 
In Section 3, we prove the stability of metric viscosity solutions under Gromov--Hausdorff convergence.
In Section 4, we prove the stability of maximizers of the dual Kantorovich problems under Gromov--Hausdorff convergence.
\section{Preliminaries}
In this section, we summarize the minimal knowledge of metric viscosity solutions, Gromov--Hausdorff convergence, and the dual Kantorovich problem for the reader's convenience. 
\subsection{Metric viscosity solutions}
Let $(X,d)$ be a metric space satisfying the following condition. For $x,y \in X$, there exists a continuous curve $\gamma: [0,1] \to X$ such that $\gamma(0)=x$, $\gamma(1)=y$ and
$$
d\bigl(\gamma(s),\gamma(t)\bigr)=|s-t|d(x,y),\quad s,t \in [0,1].
$$
A metric space with this property is called a geodesic space.

In this subsection, we consider the Hamilton--Jacobi equation with a continuous Hamiltonian $H(x,p): X\times [0,\infty) \to \mathbb{R}$
\begin{align}
    \partial_{t}u(t,x)+H(x,|\nabla u(t,x)|)&=0,\quad (t,x) \in (0,\infty) \times X, \label{e:hj1}\\
    u(0,x)&=g(x),\quad x \in X,\label{e:hj2}
\end{align}
where $g:X \to \mathbb{R}$ is continuous.

We define the local slope for a locally Lipschitz function $\phi: (0,\infty) \times X \to \mathbb{R}$ by 
$$
|\nabla \phi(t,x)|:=\limsup_{y \to x}\frac{|\phi(t,y)-\phi(t,x)|}{d(y,x)}.
$$
In the same way, we define the lower and upper local slope for a locally Lipschitz function $\phi: (0,\infty)\times X \to \mathbb{R}$ by
\begin{align*}
|\nabla^{-}\phi(t,x)|&:=\limsup_{y \to x}\frac{\max\{0,-\phi(t,y)+\phi(t,x)\}}{d(y,x)},\\ 
|\nabla^{+}\phi(t,x)|&:=\limsup_{y \to x}\frac{\max\{0,\phi(t,y)-\phi(t,x)\}}{d(y,x)}.
\end{align*}
\begin{dfn}[{\cite[Definition 2.2]{GS15}}]
Let $\phi_1, \phi_2:(0,\infty)\times X \to \mathbb{R}$ be locally Lipschitz functions. We call $(\phi_1,\phi_2)$ a subsolution (resp. supersolution) test function if $(\phi_1,\phi_2)$ satisfies the following conditions.
\begin{enumerate}
    \item The identity $|\nabla \phi_{1}(t,x)|=|\nabla^{-}\phi_{1}(t,x)|$ $\left(\text{resp.}\;|\nabla \phi_{1}(t,x)|=|\nabla^{+}\phi_{1}(t,x)|\right)$ holds on $(0,\infty) \times X$ and the slope $|\nabla\phi_{1}(t,x)|$ is continuous on $(0,\infty) \times X$.
    \item The time derivatives $\partial_{t}\phi_{1}$ and $\partial_{t}\phi_{2}$ are continuous on $(0,\infty) \times X$.
\end{enumerate}
We denote by $\underline{C}$ ($\text{resp.}\;\overline{C}$) the collection of all subsolution (resp. supersolution) test functions.
\end{dfn}
For example, by \cite[Lemma 7.2]{GS14}, the function $(\phi_1, \phi_2)=\left(d(a,\cdot)^2,0\right)$, $a \in X$ is a subsolution test function. 

To define the notion of metric viscosity solutions, we extend the domain of the Hamiltonian to $X \times \mathbb{R}$ by constant extension.
For $f: [0,\infty) \times X \to \mathbb{R}$, we define the upper (resp. lower) semicontinuous envelope by
$$
f^{*}(t,x):=\limsup_{(s,y)\to (t,x)}f(s,y).
$$
$$
\left(\text{resp.}\;f_{*}(t,x):=\liminf_{(s,y)\to (t,x)}f(s,y).\right)
$$
\begin{dfn}[{\cite[Definition 2.5]{GS15}}]
Let $u: [0,\infty) \times X \to \mathbb{R}$ be a locally bounded function. 
We say that $u$ is a metric viscosity subsolution (resp. supersolution) of (\ref{e:hj1}) if $u^{*}-\phi_1-\phi_2$, $(\phi_{1},\phi_{2}) \in \underline{C}$ ($\text{resp.}\;u_{*}-\phi_1-\phi_2$, $(\phi_1,\phi_2) \in \overline{C}$) attains a local maximum (resp. minimum) at $(t,x)$ in $(0,\infty) \times X$, then the inequality
$$
\partial_{t} \phi_1(t,x)+\partial_t \phi_2(t,x)+H_{|\nabla \phi_{2}(t,x)|^{*}}\bigl(x,|\nabla \phi_{1}(t,x)|\bigr)\le 0
$$
$$
\left(\text{resp.}\;\partial_{t} \phi_1(t,x)+\partial_t \phi_2(t,x)+H^{|\nabla \phi_{2}(t,x)|^{*}}\bigl(x,|\nabla \phi_{1}(t,x)|\bigr)\ge 0\right)
$$
holds.
Here, for $\eta\ge 0$, we define
$$
H_{\eta}(x,p):=\inf_{|s-p|\le \eta}H(x,s).
$$
$$
\left(\text{resp.}\;H^{\eta}(x,p):=\sup_{|s-p|\le \eta}H(x,s).\right)
$$
We say that $u$ is a metric viscosity subsolution (resp. supersolution) of (\ref{e:hj1}), (\ref{e:hj2}) if $u$ is a metric viscosity subsolution (resp. supersolution) of (\ref{e:hj1}) and $u$ satisfies the initial condition (\ref{e:hj2}):
$$
u(0,x) \le g(x). \quad \Bigl(\text{resp.}\; u(0,x) \ge g(x).\Bigr)
$$
We say that $u$ is a metric viscosity solution of (\ref{e:hj1}), (\ref{e:hj2}) if $u$ is a metric viscosity subsolution and supersolution of (\ref{e:hj1}), (\ref{e:hj2}).
\end{dfn}
\subsection{Gromov--Hausdorff convergence}
In this paper, we use the definition of Gromov--Hausdorff convergence via an $\epsilon$-isometry.
It is known that there are some characterizations of Gromov--Hausdorff convergence. See \cite{V09} for more details.
 \begin{dfn}\label{e-iso}
 Let $(X,d_X)$, $(Y, d_Y)$ be metric spaces.
 Let $\epsilon>0$. We call a map $f: X \to Y$ an $\epsilon$-isometry if $f$ satisfies the following conditions.
 \begin{enumerate}
     \item 
     For $x,y \in X$, the inequality
     $$
    \left|d_Y\bigl(f(x),f(y)\bigr)-d_X(x,y)\right|\le \epsilon
     $$
     holds.
     \item
     The inequality
     $
     d_H\bigl(f(X),Y\bigr) \le \epsilon
     $
     holds, where $d_H$ denotes the Hausdorff distance in $Y$.
 \end{enumerate}
 \end{dfn}
\begin{dfn}
Let $(X_n, d_n)$ and $(X_\infty, d_\infty)$ be metric spaces. We say that $X_n$ converges to $X_\infty$ in Gromov--Hausdorff sense if there exists an $\epsilon_n$-isometry $f_n: X_n \to X_\infty$ with $\epsilon_n \to 0$ as $n \to \infty$.
\end{dfn}
For an $\epsilon$-isometry $f$, one can construct a map $f^{'}$ that serves as an approximate inverse to $f$. 
\begin{dfn}\label{e-inv}
Let $(X,d_X)$, $(Y, d_Y)$ be metric spaces. Let $f:X \to Y$ be an $\epsilon$-isometry. We call a map $f^{'}:Y \to X$ an $\epsilon$-inverse of $f$ if $f^{'}$ satisfies the following conditions.    
\begin{enumerate}
\item The map $f^{'}$ is a $4\epsilon$-isometry.
\item For $x \in X$ and $y \in Y$, the inequalities 
$$
d_X\bigl(f^{'}\circ f(x),x\bigr)\le 3\epsilon, \quad d_Y\bigl(f\circ f^{'}(y),y\bigr)\le \epsilon
$$
hold.
\end{enumerate}
\end{dfn}
\begin{rem}\label{points}
Let $(X_n, d_n)$ and $(X_\infty, d_\infty)$ be metric spaces.
Suppose that $X_n$ converges to $X_\infty$ in Gromov--Hausdorff sense. Then, for $x \in X_\infty$, there exists $x_n \in X_n$ such that $f_n(x_n) \to x$ as $n \to \infty$. 
Indeed, there exists $x_n \in X_n$ such that $d_\infty\bigl(x,f_n(x_n)\bigr) \le d_\infty\bigl(x,f_n(X_n)\bigr)+\frac{1}{n}$. 
Recalling the fact that
$$
d_H\bigl(f_n(X_n),X_\infty\bigr):=\max\left\{\sup_{y\in f_n(X_n)}d_\infty(y,X_\infty), \sup_{y\in X_\infty}d_\infty\bigl(y,f_n(X_n)\bigr)\right\},
$$
we have
$$
d_\infty\bigl(x,f_n(x_n)\bigr) \le d_\infty\bigl(x,f_n(X_n)\bigr)+\frac{1}{n}\le d_H\bigl(f_n(X_n),X_\infty\bigr)+\frac{1}{n}\le \epsilon_n+\frac{1}{n}.
$$
Letting $n \to \infty$, we obtain the desired conclusion.
\end{rem}
\subsection{The dual Kantorovich problem}

Let $(X,d)$ be a compact geodesic space. 
Let $\mu, \nu$ be probability measures on $X$ and let $\Pi(\mu,\nu)$ be the set of probability measures $\pi$ on $X \times X$ satisfying $(p_1)_{\#}\pi=\mu$ and $(p_2)_{\#}\pi=\nu$. 
Here, $p_1, p_2: X\times X \to X$ are projection maps onto the first and second factors, respectively.
For $\mu, \nu$, we define $2$-Wasserstein distance $W_2(\mu,\nu)$ by
$$
W_2(\mu,\nu)=\left(\inf_{\pi \in \Pi(\mu,\nu)} \int_{X \times X} d(x,y)^2 d\pi(x,y)\right)^\frac{1}{2}.
$$
It is well known that \( W_2 \) satisfies the axioms of a distance on \( P_2(X) \) \cite{V09}.

Furthermore, $W_2$ admits the following dual formulation \cite[Theorem 5.10]{V09}:
\begin{equation*}\tag{KAN}\label{kan}
\frac{1}{2}W_2(\mu,\nu)^2=
\max_{\phi \in L^{1}(\mu)}\left(\int_{X} \phi^c \,d\nu
-\int_{X} \phi \,d\mu \right).
\end{equation*}
The right hand side of (\ref{kan}) is called the dual Kantorovich problem.

We now turn to the admissible class of functions $\phi$ in the dual formulation and the definition of their $\frac{d^2}{2}$-transform $\phi^c$.
Since \eqref{kan} admits a maximizer that is \(\frac{d^{2}}{2}\)-convex, we may, without loss of generality, restrict the admissible class of functions \(\phi\) in \eqref{kan} to those that are \(\frac{1}{2} d^{2}\)-convex.
Recall that $\phi:X \to \mathbb{R} \cup \{\pm \infty\}$ is a $\frac{d^2}{2}$-convex function if there exists a function $\zeta:X \to \mathbb{R} \cup \{\pm \infty\}$ such that 
$$
\phi(x)=\sup_{y \in X}\left(\zeta(y)-\frac{1}{2}d(x,y)^2\right),\quad x \in X.
$$
Then, for such a $\phi$, the $\frac{d^2}{2}$-transform $\phi^c$ is defined by 
$$
\phi^c(x)=\inf_{y \in X}\left( \phi(y)+\frac{1}{2}d(x,y)^2\right),\quad x \in X.
$$
Thus, if we use the Hopf--Lax semigroup 
$$
Q_t \phi(x):=\inf_{y \in X}\left( \phi(y)+\frac{1}{2t}d(x,y)^2\right),
$$
the dual Kantorovich problem can be written as
$$
\max_{\phi \in L^{1}(\mu)}\left(\int_{X} Q_1 \phi \,d\nu
-\int_{X} \phi \,d\mu \right).
$$
\section{Stability of metric viscosity solutions of Hamilton--Jacobi equations}
We use a characterization of metric viscosity solutions via quadratic distance functions.
\begin{prop}[{\cite[Proposition 7.2]{MN23}}] 
\label{equaivalence1}
We assume that $(X,d)$ is a locally compact geodesic space.
Then, the following statements are equivalent.
\begin{enumerate}
\item
A function $u: (0, \infty)\times X \to \mathbb{R}$ is a metric viscosity subsolution (resp. supersolution) of \eqref{e:hj1}.
\item
Let $\hat{a}, \hat{x} \in X$, $k \ge 0$, and $\phi$ be a $C^1((0,\infty))$ function.
If $u^*(t, x)-\phi(t)-\frac{k}{2}d(\hat{a}, x)^2$ ($\text{resp.}\;u_*(t, x)-\phi(t)+\frac{k}{2}d(\hat{a}, x)^2$) attains a strict local maximum (resp. minimum) at $(\hat{t}, \hat{x}) \in (0,\infty) \times X$,
then the inequality
$$
\partial_t \phi(\hat{t})+H\bigl(\hat{x}, k d(\hat{a}, \hat{x})\bigr) \le 0
$$
$$
\left(\text{resp.}\;\partial_t \phi(\hat{t})+H\bigl(\hat{x}, k d(\hat{a}, \hat{x})\bigr) \ge 0\right)
$$
holds.
\end{enumerate}
\end{prop}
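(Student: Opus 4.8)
The plan is to prove the two implications separately; I describe the subsolution case, the supersolution case being obtained by replacing $u^*$ by $u_*$, $\underline{C}$ by $\overline{C}$, local maxima by local minima, $H_\eta$ by $H^\eta$, and $\tfrac k2 d(\hat a,\cdot)^2$ by $-\tfrac k2 d(\hat a,\cdot)^2$ throughout (together with the supersolution analogue of \cite[Lemma 7.2]{GS14}).

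\emph{From (1) to (2).} This is the routine direction: I would simply feed the subsolution an explicit test pair. Given $\hat a\in X$, $k\ge 0$ and $\phi\in C^1((0,\infty))$, set $\phi_1(t,x):=\tfrac k2 d(\hat a,x)^2$ and $\phi_2(t,x):=\phi(t)$. By \cite[Lemma 7.2]{GS14} the pair $(d(\hat a,\cdot)^2,0)$ lies in $\underline{C}$, and since the local slope and the one-sided local slopes are positively $1$-homogeneous, so does $(\tfrac k2 d(\hat a,\cdot)^2,0)$; as $\phi_2=\phi(t)$ is independent of $x$ and has continuous time derivative, the pair $(\phi_1,\phi_2)$ still lies in $\underline{C}$. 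A strict local maximum of $u^*-\phi_1-\phi_2$ at $(\hat t,\hat x)$ is in particular a local maximum, so the definition of metric viscosity subsolution applies there; since $\partial_t\phi_1\equiv 0$, $|\nabla\phi_2|\equiv 0$ (so $H_{|\nabla\phi_2|^*}=H_0=H$), $\partial_t\phi_2(\hat t,\hat x)=\partial_t\phi(\hat t)$ and $|\nabla\phi_1(\hat t,\hat x)|=k\,d(\hat a,\hat x)$, the inequality it yields is exactly $\partial_t\phi(\hat t)+H(\hat x,k\,d(\hat a,\hat x))\le 0$.

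\emph{From (2) to (1).} Let $(\phi_1,\phi_2)\in\underline{C}$ and suppose $u^*-\phi_1-\phi_2$ has a local maximum at $(t_0,x_0)\in(0,\infty)\times X$; write $\ell:=|\nabla\phi_1(t_0,x_0)|$, $\eta:=|\nabla\phi_2(t_0,x_0)|^*$ and $\theta:=\partial_t\phi_1(t_0,x_0)+\partial_t\phi_2(t_0,x_0)$. Since $H_\eta(x_0,\ell)=\inf_{|s-\ell|\le\eta}H(x_0,s)$, it suffices to produce some $s^*$ with $|s^*-\ell|\le\eta$ and $\theta+H(x_0,s^*)\le 0$. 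I would first add a quadratic penalization in both variables to reduce to a strict, non-degenerate maximum; use the continuity of $\partial_t\phi_1,\partial_t\phi_2$ to absorb the time dependence of $\phi_1+\phi_2$ near $t_0$ into a single $\phi\in C^1$ with $\partial_t\phi(t_0)=\theta$; and use the definition of $\eta$ to fix, for each $\varepsilon>0$, a neighborhood of $(t_0,x_0)$ on which $|\phi_2(t,y)-\phi_2(t,x_0)|\le(\eta+\varepsilon)d(x_0,y)$. The core step is to touch $\phi_1$ from above near $x_0$ by quadratic distance functions: the condition $|\nabla^-\phi_1(t_0,x_0)|=|\nabla\phi_1(t_0,x_0)|=\ell$ (which holds at every nearby point) lets me construct, for each small $r>0$, a point $\hat a(r)$ with $d(x_0,\hat a(r))=r$ and $\phi_1(t_0,\hat a(r))\le\phi_1(t_0,x_0)-(\ell-\varepsilon)r$, using the compactness of closed balls to follow a curve of maximal slope for $\phi_1(t_0,\cdot)$. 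Taking $\hat a_k:=\hat a(s_k/k)$ for a suitable $s_k\in[\ell-\eta-\varepsilon,\ell+\eta+\varepsilon]$ and putting $\psi_k(x):=\tfrac k2 d(\hat a_k,x)^2-\tfrac{s_k^2}{2k}$ (so $\psi_k(x_0)=0$ and $|\nabla\psi_k(x_0)|=s_k$), the slope bounds for $\phi_1$ and $\phi_2$ together with the triangle inequality should show that $\psi_k$ dominates $(\phi_1+\phi_2)(t_0,\cdot)-(\phi_1+\phi_2)(t_0,x_0)$ outside a ball about $x_0$ of radius $o(1/k)$. Then on a fixed compact neighborhood the upper semicontinuous function $u^*-\psi_k-\phi$ attains a maximum at some $(t_k,x_k)\to(t_0,x_0)$ with $k\,d(\hat a_k,x_k)\to s^*$, $|s^*-\ell|\le\eta$ (letting $\varepsilon\to 0$ along a subsequence); adding a further $d(x_k,\cdot)^2$-penalty makes this a strict maximum, so (2) applies and gives $\partial_t\phi(t_k)+o(1)+H(x_k,k\,d(\hat a_k,x_k))\le 0$, and letting $k\to\infty$ with $H$ continuous yields $\theta+H(x_0,s^*)\le 0$.

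The main obstacle is the core step of the second implication: building the touching paraboloid $\psi_k$ in a general geodesic space, where one has to play off the one-sided slope condition on $\phi_1$, the Lipschitz control on $\phi_2$, and the triangle inequality against one another so that the discrepancy region $\{\psi_k<(\phi_1+\phi_2)(t_0,\cdot)-(\phi_1+\phi_2)(t_0,x_0)\}$ has radius $o(1/k)$ — this is exactly what forces the slope $k\,d(\hat a_k,x_k)$ at which (2) is invoked into $[\ell-\eta,\ell+\eta]$ in the limit, and hence produces $H_\eta$ rather than an infimum over a wider range of slopes. Local compactness enters twice here: to run the maximal-slope construction of the $\hat a(r)$ and to guarantee that the perturbed maxima $(t_k,x_k)$ exist. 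A cleaner, but not obviously shorter, alternative is to pass through an auxiliary characterization with cone test functions $\lambda\,d(a,\cdot)$, whose globally constant slope makes the combined slope $\ell+\eta$ transparent; but relating cones to quadratic distance functions then requires an argument of the same flavor.
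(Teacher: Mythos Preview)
The paper does not contain a proof of this proposition: it is quoted verbatim from \cite[Proposition~7.2]{MN23} and used as a black box in the proof of Theorem~\ref{stability1}. There is therefore no ``paper's own proof'' to compare your attempt against.

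On the substance of your sketch: the implication $(1)\Rightarrow(2)$ is correct and complete as written. For $(2)\Rightarrow(1)$ you have identified the right outline---perturb to a strict maximum, absorb the time dependence into a single $C^1$ function $\phi$, and touch the spatial part from above by a quadratic distance function---but what you have is a plan rather than a proof. In particular, the ``core step'' you flag (producing $\hat a_k$ and $s_k$ so that $\psi_k$ dominates $\phi_1+\phi_2$ outside a ball of radius $o(1/k)$, forcing $k\,d(\hat a_k,x_k)\to s^*\in[\ell-\eta,\ell+\eta]$) is the entire content of the argument, and you have not carried it out: the claimed maximal-slope construction of $\hat a(r)$ with $\phi_1(t_0,\hat a(r))\le\phi_1(t_0,x_0)-(\ell-\varepsilon)r$ is not justified (the slope condition $|\nabla^-\phi_1|=|\nabla\phi_1|$ is pointwise, not a gradient-flow existence statement), and the assertion that the discrepancy region has radius $o(1/k)$ is stated without an estimate. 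Your alternative via cone test functions $\lambda\,d(a,\cdot)$ is in fact closer to how this kind of equivalence is typically proved in the literature, and is likely the route taken in \cite{MN23}; if you want a self-contained argument, that is the direction to pursue.
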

We are now prepared to present the proof of Theorem \ref{stability1}.
\begin{proof}[Proof of  Theorem \ref{stability1}]
We first prove that $u^{\infty}$ is a metric viscosity subsolution of (\ref{e:limhj1}).
We apply the implication from (2) to (1) in Proposition \ref{equaivalence1}.

Let $a_\infty \in X_{\infty}$, $t_\infty \in (0,\infty)$, and $\phi \in C^{1}((0,\infty))$.
The condition (S1) implies that there exists $a_n \in X_{n}$ such that $f_{n}(a_n) \to a_{\infty}$. 
See Remark \ref{points} for the details.
We assume 
$$
F_{\infty}(t,x):=u^\infty(t,x)-\phi(t)-\frac{k}{2}d_{\infty}(a_\infty,x)^2
$$
has a local strict maximum at $(t_\infty,x_\infty) \in (0,\infty) \times X_{\infty}$. 
Let $l>0$.
We consider a function
$$
F_{n}(t,x):=(u^n)^{\ast}(t,x)-\phi(t)-\frac{k}{2}d_{n}(a_n,x)^2-\frac{l}{2}d_n(x_n,x)^2-\frac{l}{2}|t_n-t|^2.
$$
By the definition of $u^{\infty}$ and the upper semicontinuity of $u^{\infty}$, there exist $x_n \in X_{n}$  and $t_n \in (0,\infty)$ such that $f_n(x_n) \to x_{\infty}$, $t_n \to t_\infty$ and $\lim_{n \to \infty} (u^{n})^{\ast}(t_n, x_{n})=u^{\infty}(t_\infty, x_{\infty})$.

Since $F_n$ is a upper semicontinuous function, there exists a maximizer $(\hat{t}_n, \hat{x}_{n})$ of $F_{n}$ on $[t_n-d,t_n+d] \times \overline{B_{d}(x_{n})}$ with $0<d<\frac{t_n}{2}$.
This implies
\begin{equation}\label{e:stability1}
  \begin{aligned}
    &(u^{n})^{\ast}(t_n,x_{n})
      -\phi(t_n)
      -\frac{k}{2}\,d_{n}(a_{n},x_{n})^{2} \\
    &\le
      (u^{n})^{\ast}(\hat{t}_n,\hat{x}_{n})
      -\phi(\hat{t}_n)
      -\frac{k}{2}\,d_{n}(a_{n},\hat{x}_{n})^{2} 
      -\frac{l}{2}\,d_{n}(x_{n},\hat{x}_{n})^{2}
      -\frac{l}{2}| t_n-\hat{t}_n|^{2}.
  \end{aligned}
\end{equation}

We claim that the left hand side of (\ref{e:stability1}) converges to $u^{\infty}(t_\infty, x_{\infty})-\phi(t_\infty)-\frac{k}{2}d_{\infty}(a_{\infty},x_{\infty})^2$. 
Indeed, by the definition of $f_{n}$, we compute directly
\begin{align}
d_{\infty}\bigl(f_{n}(a_{n}),f_n(x_{n})\bigr)-\epsilon_{n} 
\le d_{n}(a_{n},x_{n}) \notag
&\le d_{\infty}\bigl(f_{n}(a_{n}),f_n(x_{n})\bigr)+\epsilon_n.
\end{align}
Hence, we get $(u^{n})^{\ast}(t_n, x_{n})-\phi(t_n)-\frac{k}{2}d_{n}(a_{n},x_{n})^2 \to u^{\infty}(t_\infty, x_{\infty})-\phi(t_\infty)-\frac{k}{2}d_{\infty}(a_{\infty},x_{\infty})^2$.

To prove $f_n(\hat{x}_n) \to x_\infty$, we show that $f_n(\hat{x}_n)$ converges to some $c_1 \in X_\infty$ as $n \to \infty$.
Indeed, by the triangle inequality and the Cauchy inequality, we have
\begin{equation}\label{e:stability3}
  \begin{aligned}
    &d_{\infty}\bigl(a_\infty,f_{n}(\hat{x}_{n})\bigr)^{2}\\
      &\le
      2\Bigl(d_{\infty}\bigl(a_\infty,f_{n}(a_{n})\bigr)+\epsilon_n\Bigr)^{2}
      +2\Bigl(d_{\infty}\bigl(f_{n}(a_{n}),f_{n}(\hat{x}_{n})\bigr)-\epsilon_n\Bigr)^{2}.
  \end{aligned}
\end{equation}
We evaluate the second term in the right hand side of (\ref{e:stability3}).
By the definition of $f_n$, we have
\[
\left|   d_{n}(a_n,\hat{x}_n) - d_{\infty}\bigl(f_n(a_n), f_n(\hat{x}_n)\bigr) \right| \le \,\epsilon_n,
\]
and
\[
\left|d_{n}(a_n,x_n) - d_{\infty}\bigl(f_n(a_n), f_n(x_n)\bigr) \right| \le \,\epsilon_n.
\]
Thus, by (\ref{e:stability1}), we deduce
\begin{align}\label{e:stability4}
\begin{aligned}
  &\frac{k}{2}\Bigl(d_{\infty}\bigl(f_{n}(a_{n}),f_{n}(\hat{x}_{n})\bigr)-\epsilon_n\Bigr)^2
  -\frac{k}{2}\Bigl(d_{\infty}\bigl(f_{n}(a_n),f_n(x_{n})\bigr)+\epsilon_{n}\Bigr)^2 \\
  &\le \frac{k}{2}\Bigl(d_{\infty}\bigl(f_{n}(a_{n}),f_{n}(\hat{x}_{n})\bigr)-\epsilon_n\Bigr)^2
       -\frac{k}{2}\Bigl(d_{\infty}\bigl(f_{n}(a_n),f_n(x_{n})\bigr)+\epsilon_{n}\Bigr)^2 \\
  &\quad{}+\frac{l}{2}d_n(x_n,\hat{x}_n)^2
           +\frac{l}{2}|t_n-\hat{t}_n|^2 \\[6pt]
  &\le (u^{n})^{\ast}(\hat{t}_n,\hat{x}_{n})-(u^{n})^{\ast}(t_n,x_{n})
       +\phi(\hat{t}_n)-\phi(t_n).
\end{aligned}
\end{align}
By \ref{i:S4}, we have
\begin{equation}\label{e:stability5}
(u^{n})^{\ast}(\hat{t}_n, \hat{x}_{n})-(u^{n})^{\ast}(t_n, x_{n})\le 2M.
\end{equation}
From (\ref{e:stability3}), (\ref{e:stability4}), and (\ref{e:stability5}),
we get
\begin{align*}
&d_{\infty}\bigl(a_\infty,f_n(\hat{x}_n)\bigr)^2\\
&\le 2\Bigl(d_{\infty}\bigl(a_\infty,f_n(a_n)\bigr)+\epsilon_n\Bigr)^2+2\Bigl(d_{\infty}\bigl(f_{n}(a_n),f_n(x_{n})\bigr)+\epsilon_{n}\Bigr)^2+\frac{8M}{k}+\frac{4L_{\phi}d}{k},
\end{align*}
where $L_\phi$ is a Lipschitz constant of $\phi$ on some closed set including $[t_n-d,t_n+d]$. 
Thus, for large $n$, some closed ball independent of $n$ in $X_\infty$ contains $f_n(\hat{x}_n)$.  
Consequently, by the local compactness of $X_\infty$, we have $f_n(\hat{x}_n)$ converges to some $c_1 \in X_\infty$ up to subsequences.

We prove that $f_{n}(\hat{x}_{n}) \to x_{\infty}$ and $\hat{t}_n\to t_\infty$.
Using the triangle inequality and the definition of $f_n$, we have
\begin{align*}
d_{\infty}\bigl(x_\infty,f_{n}(\hat{x}_{n})\bigr)
&\le d_{\infty}\bigl(x_\infty,f_n(x_n)\bigr)+d_{n}(x_n,\hat{x}_{n})+\epsilon_{n}\\
&\le d_{\infty}\bigl(x_\infty,f_n(x_n)\bigr)+d+\epsilon_{n}.
\end{align*}
Letting $n \to \infty$, we obtain $c_1 \in \overline{B_d(x_\infty)}$. Since $\hat{t}_n \in [t_n - d, t_n + d]$, by passing to a subsequence (still denoted by the same index), we deduce that $\hat{t}_n$ converges to some $c_2 \in (0, \infty)$. Furthermore, letting $n \to \infty$ in (\ref{e:stability1}), we obtain that $f_n(\hat{x}_n) \to c_1 = x_\infty$ and \(\hat{t}_n \to c_2 = t_\infty\), independently of the choice of subsequence.

We are now in a position to prove $u^{\infty}$ is a metric viscosity subsolution of (\ref{e:limhj1}).
Note that, by choosing \( l \) sufficiently large, one may ensure that \( F_n \) attains its maximum at an interior point of $[t_n-d,t_n+d] \times \overline{B_{d}(x_{n})}$. 
Indeed, in the middle term of (\ref{e:stability4}), we observe that $\frac{k}{2}\Bigl(d_{\infty}\bigl(f_{n}(a_{n}),f_{n}(\hat{x}_{n})\bigr)-\epsilon_n\Bigr)^2
  -\frac{k}{2}\Bigl(d_{\infty}\bigl(f_{n}(a_n),f_n(x_{n})\bigr)+\epsilon_{n}\Bigr)^2 \to 0$.
Therefore, by choosing \(l\) sufficiently large, we can make both \(d_n(x_n,\hat{x}_n)\) and \(|t_n - \hat{t}_n|\) arbitrarily small, uniformly in all sufficiently large \(n\).

Consider the subsolution test function
$$
\left(\phi_1(t,x), \phi_2(t,x)\right)=\left(\phi(t)+\frac{k}{2}d_{n}(a_n,x)^2,\frac{l}{2}d_n(x_n,x)^2+\frac{l}{2}|t_n-t|^2\right).
$$
Since $u^n$ is a metric viscosity subsolution of (\ref{e:ghj1}), it follows that
$$
\partial_t \phi(\hat{t}_n)+l(\hat{t}_n-t_n)+\left(H_{n}\right)_{ld_n(\hat{x}_n,x_n)}\bigl(\hat{x}_n, kd_{n}(a_n,\hat{x}_n)\bigr)\le 0.
$$
Moreover, by \ref{i:S2}, we have $\left(H_{n}\right)_{ld_n(\hat{x}_n,x_n)}\bigl(\hat{x}_n, kd_{n}(a_n,\hat{x}_n)\bigr)=H_{n}\bigl(\hat{x}_n, kd_{n}(a_n,\hat{x}_n)-ld_n(\hat{x}_n,x_n)\bigr)$.
Consequently, we obtain
\begin{equation}\label{e:stability6}
\partial_t \phi(\hat{t}_n)+l(\hat{t}_n-t_n)+H_{n}\bigl(\hat{x}_n, kd_{n}(a_n,\hat{x}_n)-ld_n(\hat{x}_n,x_n)\bigr)\le 0.
\end{equation}
We deduce from \ref{i:S2}, the definition of $f_n$, and (\ref{e:stability6}) that
\begin{align*}\label{e:stability7}
&\partial_t \phi(\hat{t}_n)
  + l(\hat{t}_n - t_n)\notag\\
&\quad
  + H_n\!\Bigl(
      \hat{x}_n,\,
      k\,d_\infty\bigl(f_n(a_n), f_n(\hat{x}_n)\bigr)
      - l\,d_\infty\bigl(f_n(\hat{x}_n), f_n(x_n)\bigr)-(k+l)\epsilon_n
    \Bigr)
  \le 0.
\end{align*}

Letting $n \to \infty$ and applying \ref{i:S3}, we have
$$
\partial_t \phi(t_\infty)+H_\infty\bigl(x_\infty,kd_\infty(a_\infty,x_\infty)\bigr)\le 0.
$$
Thus, by Proposition \ref{equaivalence1}, $u^\infty$ is a metric viscosity subsolution of (\ref{e:limhj1}).

Finally, we prove that $u^{\infty}$ satisfies the initial condition (\ref{e:limhj2}). Let $x_\infty \in X_\infty$.
Take arbitrary $t_n \in (0,\infty)$ and $x_n \in X_n$ such that 
$t_n \to 0$ and $f_n(x_n) \to x_\infty$ in $X_\infty$.
By the continuity at the initial time of $u^n$, we have
$$
u^\infty(0,x_\infty)=\sup_{(t_n,x_n)}\limsup_{n \to \infty}(u^{n})^{\ast}(t_n,x_n) \le\sup_{(x_n)}\limsup_{n \to \infty}(u^n)^{\ast}(0,x_n).
$$
Since $u^{n}$ satisfies the initial condition (\ref{e:ghj2}), we get
\begin{equation}\label{e:stability8}
(u^n)^{\ast}(0,x_n) \le g^n(x_n).
\end{equation}
By \ref{i:S5} and the definition of the $\epsilon_n$-inverse, the right hand side of (\ref{e:stability8}) can be estimated from above as
\begin{align}\label{e:stability9}
\begin{aligned}
  &g^{n}(x_{n})\le g^{\infty}(x_{\infty})
     +\eta(3\epsilon_n)
     +\Bigl|g^{n} \bigr(f_{n}^{'} \circ f_n(x_{n})\bigr)-g^{\infty}(x_{\infty})\Bigr|.
\end{aligned}
\end{align}
Furthermore, by the triangle inequality, \ref{i:S5} and the definition of the $\epsilon_n$-inverse,
we have
\begin{equation}\label{e:stability10}
\begin{aligned}
&\bigl|g^{n}\bigl(f_{n}'\circ f_n(x_{n})\bigr)-g^{\infty}(x_{\infty})\bigr|\\
  &\le \eta\Bigl(d_\infty\bigl(f_n(x_n),x_\infty\bigr)+4\epsilon_n\Bigr)
 +\bigl|g^{n}\bigl(f_{n}'(x_\infty)\bigr)-g^{\infty}(x_\infty)\bigr|.
\end{aligned}
\end{equation}
Thus, in view of (\ref{e:stability8}), (\ref{e:stability9}), (\ref{e:stability10}) and \ref{i:S5}, we conclude that
\begin{align*}
&u^\infty(0,x_\infty)-g^\infty(x_\infty) \le\sup_{(x_n)}\limsup_{n \to \infty}\left\{(u^n)^{\ast}(0,x_n)-g^\infty(x_\infty)\right\}\\
&\le \sup_{(x_n)}\limsup_{n \to \infty}\Big\{\eta(3\epsilon_n)+\eta\Bigl(d_\infty\bigl(f_n(x_n),x_\infty\bigr)+4\epsilon_n\Bigr)
 +\bigl|g^{n}\bigl(f_{n}'(x_\infty)\bigr)-g^{\infty}(x_\infty)\bigr|\Big\}\\
&\le0
\end{align*}

In the same way, we can prove that $u_\infty$ is a metric viscosity supersolution of (\ref{e:limhj1}), (\ref{e:limhj2}). 
\end{proof}
By Theorem \ref{stability1}, the convergence of metric viscosity solutions follows immediately.
\begin{cor}[Uniform convergence]\label{unifconv}
Let $(X_n, d_n)$ and $(X_\infty, d_\infty)$ be compact geodesic spaces.
Let $u^n$ be a continuous metric viscosity solution of (\ref{e:ghj1}), (\ref{e:ghj2}).
We make the same assumptions as in Theorem \ref{stability1}.
We also assume that the comparison principle holds for the Hamilton--Jacobi equation (\ref{e:limhj1}), (\ref{e:limhj2}); that is, for continuous metric viscosity subsolution $u$ and supersolution $v$ of (\ref{e:limhj1}), (\ref{e:limhj2}), we have $u \le v$ on $[0,\infty) \times X_\infty$. 
 Then, $u^\infty =u_\infty(=:\overline{u})$ on $[0,\infty) \times X_\infty$ and $u^n$ converges uniformly to $\overline{u}$:
$$
\lim_{n \to \infty}u^n(t_n,x_n)=\overline{u}(t,x),\quad (t,x) \in [0,\infty) \times X_\infty,
$$
where $x_n \in X_n$ satisfies $f_n(x_n) \to x$ in $X_\infty$ and $t_n \in [0,\infty)$ satisfies $t_n \to t$ in $[0,\infty)$ as $n \to \infty$.
\end{cor}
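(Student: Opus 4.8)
The plan is to run the method of half-relaxed limits, with Theorem~\ref{stability1} playing the role of the usual ``the relaxed limit is a sub/supersolution'' step and the comparison principle closing the loop. \emph{First,} since $u^n$ is a \emph{continuous} metric viscosity solution of (\ref{e:ghj1}), (\ref{e:ghj2}), it is at once a metric viscosity subsolution and a metric viscosity supersolution, and $(u^n)^\ast=(u^n)_\ast=u^n$. Applying Theorem~\ref{stability1} to $u^n$ viewed as a subsolution shows that $u^\infty$ is a metric viscosity subsolution of (\ref{e:limhj1}), (\ref{e:limhj2}); applying it to $u^n$ viewed as a supersolution shows that $u_\infty$ is a metric viscosity supersolution of (\ref{e:limhj1}), (\ref{e:limhj2}). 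The semicontinuity of $u^\infty$ and $u_\infty$ and the continuity at the initial time demanded by Theorem~\ref{stability1} are among the standing hypotheses of the corollary, so nothing extra is needed here.

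\emph{Second,} I would record the elementary ordering $u_\infty\le u^\infty$ on $[0,\infty)\times X_\infty$: by Remark~\ref{points}, for every $(t,x)$ there is at least one admissible sequence $(t_n,x_n)$ with $t_n\to t$ and $f_n(x_n)\to x$, and for any such sequence $u_\infty(t,x)\le\liminf_{n}u^n(t_n,x_n)\le\limsup_{n}u^n(t_n,x_n)\le u^\infty(t,x)$. The reverse inequality is exactly the comparison principle applied to the subsolution $u^\infty$ and the supersolution $u_\infty$, which gives $u^\infty\le u_\infty$. Hence $u^\infty=u_\infty=:\overline u$, and $\overline u$, being simultaneously upper and lower semicontinuous, is continuous on $[0,\infty)\times X_\infty$. (At $t=0$ one does not even need comparison: the subsolution inequality $u^\infty(0,\cdot)\le g^\infty$, the supersolution inequality $u_\infty(0,\cdot)\ge g^\infty$, together with $u_\infty\le u^\infty$, force $\overline u(0,\cdot)=g^\infty$.)

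\emph{Third,} for the convergence itself, fix $(t,x)$ and any sequence with $t_n\to t$ and $f_n(x_n)\to x$. Directly from the definitions of $u^\infty$ and $u_\infty$ as a supremum, respectively infimum, of $\limsup$'s and $\liminf$'s over all admissible sequences, one has $\limsup_{n}u^n(t_n,x_n)\le u^\infty(t,x)=\overline u(t,x)=u_\infty(t,x)\le\liminf_{n}u^n(t_n,x_n)$, so $\lim_{n}u^n(t_n,x_n)=\overline u(t,x)$. Genuine uniformity on a time-slab $[T_1,T_2]\times X_\infty$ with $0<T_1<T_2$ (where (S4) applies) then follows by a standard contradiction argument: if it failed, the compactness of $X_\infty$ and of $[T_1,T_2]$ would yield subsequences $f_n(x_n)\to x^\ast$ and $t_n\to t^\ast$ along which $|u^n(t_n,x_n)-\overline u(t_n,f_n(x_n))|$ stays bounded away from $0$, contradicting the pointed convergence just established together with the continuity of $\overline u$.

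\emph{Main obstacle.} The only non-routine point is the comparison step: Theorem~\ref{stability1} produces $u^\infty$ merely as an upper semicontinuous subsolution and $u_\infty$ merely as a lower semicontinuous supersolution, so to obtain $u^\infty\le u_\infty$ one must invoke the comparison principle in its formulation for semicontinuous sub/supersolutions (the standard formulation in the metric viscosity solution theory, cf.\ \cite{GS15}) rather than the ``continuous'' version as literally phrased; alternatively one first checks that $\overline u$ is continuous before appealing to comparison, but the natural route is the semicontinuous comparison. Everything else in the argument is bookkeeping with the relaxed-limit definitions and the semicontinuity assumptions.
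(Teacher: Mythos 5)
Your proposal is correct and follows essentially the same route as the paper: $u_\infty\le u^\infty$ from the definitions, the reverse inequality from Theorem~\ref{stability1} plus comparison, and then the pointwise/uniform convergence by the standard half-relaxed-limit squeeze (which the paper dismisses as ``a standard argument'' and you spell out). Your remark that the comparison principle must be invoked in its semicontinuous formulation (since $u^\infty$ is only upper and $u_\infty$ only lower semicontinuous) is a fair and worthwhile observation about the hypothesis as literally phrased, but it does not change the structure of the argument.
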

\begin{proof}
By the definition of $u^\infty$ and $u_\infty$, we have $u^\infty \ge u_\infty$ in $[0,\infty) \times X_\infty$. Furthermore, by Theorem \ref{stability1} and the comparison principle, we have $u^\infty \le u_\infty$. Consequently, $u^{\infty}=u_\infty$ in $[0,\infty) \times X_\infty$.

By a standard argument in the theory of viscosity solutions, we obtain
\[
  \lim_{n \to \infty} u^n(t_n, x_n) \;=\; \overline{u}(t, x),
  \quad (t,x) \in [0,\infty) \times X_\infty,
\]
where $x_n \in X_n$ satisfies $f_n(x_n) \to x$ and $t_n \in [0,\infty)$ satisfies $t_n \to t$ as $n \to \infty$.
Thus, we obtain the conclusion.
\end{proof}
\section{Stability of the dual Kantorovich problem}
To prove Proposition \ref{stakan}, we demonstrate the convergence of the Hopf--Lax semigroups under Gromov--Hausdorff convergence. 
\begin{lem}\label{important}
Let $(X_n, d_n)$ and $(X_\infty, d_\infty)$ be compact geodesic spaces.
Assume \ref{i:S1}.
We also assume that $g^n: X_n \to \mathbb{R}$ and $g^\infty: X_\infty \to \mathbb{R}$ satisfy the following condition. 
\begin{enumerate}
\renewcommand{\labelenumi}{(S5)$'$}
\renewcommand{\theenumi}{(S5)$'$}
\item 
\label{i:S5pr}
The function $g^n$ is equi-Lipschitz in $n$, that is, there exists a positive constant $L$ such that, for $x,y \in X_n$, the inequality 
    $$
    |g^n(x)-g^n(y)|\le L d_n(x,y)
    $$ holds.
Furthermore, for $x \in X_\infty$, $g^n(f^{'}_{n}(x))$ converges to $g^\infty(x)$ as $n \to \infty$. 
\end{enumerate}
Then, for $t>0$, the Hopf--Lax semigroup $Q_t g^n$
converges uniformly to $Q_t g^\infty$ as $n \to \infty$. More precisely, for $t>0$, the convergence
\begin{equation}\label{uconv}
\lim_{n \to \infty}Q_t g^n(x_n)=Q_t g^{\infty}(x),\quad x \in X_\infty
\end{equation}
holds, where $x_n \in X_n$ satisfies $f_n(x_n) \to x$ in $X_{\infty}$ as $n \to \infty$.
\end{lem}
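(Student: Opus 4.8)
The plan is to recognize the Hopf--Lax semigroup as a metric viscosity solution of a quadratic Hamilton--Jacobi equation and then invoke Corollary \ref{unifconv}. I would begin by recalling (see \cite{GS14,GS15}) that on a compact geodesic space $X$ and for a Lipschitz $g$, the map $(t,x)\mapsto Q_tg(x)$ is the unique continuous metric viscosity solution of $\partial_tu+\tfrac12|\nabla u|^2=0$ on $(0,\infty)\times X$ with $u(0,\cdot)=g$, and that the comparison principle holds for this equation. Accordingly I take
$$
H_n(x,p)=H_\infty(x,p):=\tfrac12\bigl(\max\{p,0\}\bigr)^2,
$$
which is continuous, does not depend on the space variable, and is compatible with the constant extension to $p<0$ fixed before Theorem \ref{stability1}, and I set $u^n(t,x):=Q_tg^n(x)$ on $X_n$. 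A preliminary observation is that \ref{i:S5pr} forces $g^\infty$ to be $L$-Lipschitz, since for $x,y\in X_\infty$
$$
|g^\infty(x)-g^\infty(y)|=\lim_{n\to\infty}\bigl|g^n(f_n'(x))-g^n(f_n'(y))\bigr|\le L\limsup_{n\to\infty}d_n\bigl(f_n'(x),f_n'(y)\bigr)=Ld_\infty(x,y),
$$
using that $f_n'$ is a $4\epsilon_n$-isometry with $\epsilon_n\to0$; hence $(t,x)\mapsto Q_tg^\infty(x)$ is the continuous metric viscosity solution of (\ref{e:limhj1}), (\ref{e:limhj2}) with initial datum $g^\infty$.

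The main body of the argument is to verify the hypotheses of Theorem \ref{stability1} and Corollary \ref{unifconv}. Condition \ref{i:S1} is assumed; \ref{i:S2} holds because $p\mapsto\tfrac12(\max\{p,0\})^2$ is nondecreasing; \ref{i:S3} is immediate since $H_n$ and $H_\infty$ are one fixed continuous function with no space dependence; and \ref{i:S5} follows from \ref{i:S5pr} with modulus $\eta(r)=Lr$. For \ref{i:S4} I would use that Gromov--Hausdorff convergence to the compact space $X_\infty$ gives $D:=\sup_n\operatorname{diam}(X_n)<\infty$; combined with the equi-Lipschitz bound in \ref{i:S5pr} and the convergence $g^n(f_n'(x_0))\to g^\infty(x_0)$ at a fixed $x_0\in X_\infty$, this yields $\sup_n\|g^n\|_{L^\infty(X_n)}<\infty$, whence $\inf_{X_n}g^n\le Q_tg^n(x)\le g^n(x)$ bounds $u^n$ on $(0,\infty)\times X_n$ uniformly in $n$. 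For the continuity hypothesis at the initial time I would invoke the elementary estimate
$$
0\le g^n(x)-Q_tg^n(x)=\sup_{y\in X_n}\Bigl(g^n(x)-g^n(y)-\tfrac1{2t}d_n(x,y)^2\Bigr)\le\sup_{r\ge0}\Bigl(Lr-\tfrac{r^2}{2t}\Bigr)=\tfrac{L^2t}{2},
$$
which is uniform in $n$: for $t_n\to0$ and any $x_n\in X_n$ it forces $u^n(t_n,x_n)-g^n(x_n)\to0$, and since $u^n$ is continuous with $u^n(0,\cdot)=g^n$ both suprema in that hypothesis reduce to $\sup_{(x_n)}\limsup_{n\to\infty}g^n(x_n)$. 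Finally, standard Hopf--Lax estimates for $L$-Lipschitz data together with $D<\infty$ give, for each $T>0$, equi-continuity of $\{u^n\}$ on $[0,T]\times X_n$ uniform in $n$; transporting through the $\epsilon_n$-isometries, this shows $u^\infty$ and $u_\infty$ are continuous on $[0,\infty)\times X_\infty$, in particular upper, resp. lower, semicontinuous.

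With the hypotheses in place, Theorem \ref{stability1} makes $u^\infty$ a metric viscosity subsolution and $u_\infty$ a supersolution of (\ref{e:limhj1}), (\ref{e:limhj2}); since $u^\infty\ge u_\infty$ holds by construction and comparison gives the reverse inequality, $\overline u:=u^\infty=u_\infty$ is a continuous metric viscosity solution of (\ref{e:limhj1}), (\ref{e:limhj2}) with initial datum $g^\infty$, so $\overline u(t,x)=Q_tg^\infty(x)$ by uniqueness. Corollary \ref{unifconv}, applied with the constant sequence $t_n\equiv t$, then yields $Q_tg^n(x_n)=u^n(t,x_n)\to\overline u(t,x)=Q_tg^\infty(x)$ whenever $f_n(x_n)\to x$ in $X_\infty$, which is exactly (\ref{uconv}).

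The hard part will be the bookkeeping in the middle step: the single quantitative hypothesis \ref{i:S5pr} must be promoted to the uniform bound \ref{i:S4}, the uniform-in-$n$ continuity at $t=0$, and the semicontinuity of the limit functions, each of which rests on $\sup_n\operatorname{diam}(X_n)<\infty$ turning equi-Lipschitz control of $g^n$ into uniform control of $g^n$ and of $Q_tg^n$. The only inputs from the established theory are the identification of $Q_\cdot g$ with the metric viscosity solution of the quadratic Hamilton--Jacobi equation and the comparison principle for it on compact geodesic spaces.
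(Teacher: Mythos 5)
Your proposal is correct and follows essentially the same route as the paper: identify $Q_tg^n$ as the unique continuous metric viscosity solution of the quadratic Hamilton--Jacobi equation, verify the hypotheses of Theorem \ref{stability1} and Corollary \ref{unifconv} (using the equi-Lipschitz bound on $g^n$, the uniform diameter bound, and the standard Hopf--Lax estimates for the uniform bound, the initial-time continuity, and the semicontinuity of the limits), and conclude via the comparison principle. Your write-up simply makes explicit several verifications that the paper's proof states more tersely.
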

\begin{proof}
We note that $Q_t g^n$ is the unique metric viscosity solution of 
$$
\partial_{t}{u^n}(t,x)+\frac{1}{2}|\nabla u^n(t,x)|^{2}=0,\quad (t,x) \in (0,\infty) \times X_n,
$$
$$
u^{n}(0,x)=g^n(x), \quad x \in X_n.
$$
Likewise, $Q_t g^\infty$ is also the unique metric viscosity solution on $X_\infty$; see \cite[Theorem 7.7]{GS14}.

We verify in turn the assumptions required for Corollary~\ref{unifconv}.
Conditions \ref{i:S2}, \ref{i:S3} can be checked directly. 
We can check that $u^n$ is equi-Lipschitz in $n$ by the equi-coercivity of the Hamiltonian and the equi-Lipschitz continuity of $g^n$ in $n$ (which follows by a standard argument from viscosity solution theory).
Thus,by the compactness of $X_n$, we can prove that $u^n$ satisfies \ref{i:S4}.
The equi-Lipschitz continuity of \(u^n\) in $n$ also implies the continuity at initial time of $u^n$ and the semicontinuity of both \( u^\infty \) and \( u_\infty \).
Moreover, \ref{i:S5} follows immediately from \ref{i:S5pr}.
Thus, by Corollary \ref{unifconv}, we obtain (\ref{uconv}).
\end{proof}
\begin{rem}
By the definition of an $\epsilon_n$-inverse $f_n^{'}$ of $f_n$, the convergence (\ref{uconv}) corresponds to the standard notion of convergence in optimal transport theory \cite{V09}. More precisely, the convergence 
\begin{equation*}
\lim_{n \to \infty}Q_t g^n \circ f_n^{'}(x)=Q_t g^{\infty}(x)
\end{equation*}
holds for $t>0$ and uniformly for $x \in X_\infty$.
\end{rem}
We are now in position to prove Proposition \ref{stakan} using Lemma \ref{important}. 
To this end, we may assume that $f_n^{'}$ is a Borel map.
\begin{proof}[Proof of Proposition \ref{stakan}]
Let $\phi^n$ be a maximizer of (\ref{kann}).
Without loss of generality, we can assume that $\phi^n$ is $\frac{d_n^2}{2}$-convex \cite[Theorem 5.10 (iii)]{V09}. 
Fix a point $z \in X_{\infty}$. 
Define $c^n:=\phi^n\bigl(f^{'}_{n}(z)\bigr)$. 
Then, $\bigl(\phi^n-c^n\bigr)\bigl(f^{'}_{n}(z)\bigr)=0$ holds. 
Since $\phi^n$ is $\frac{d_n^2}{2}$-convex, $\phi^n-c^n$ is $1$-Lipschitz with respect to $\frac{d^{2}_{n}}{2}$. 
Thus, we have
\begin{equation}\label{e:pfkan1}
\begin{aligned}
&\bigl|(\phi^{n}-c^{n})(x)-(\phi^{n}-c^{n})(y)\bigr|\\
&\le \frac{d_{n}(x,y)^{2}}{2}\le \frac{\operatorname{diam}(X_{n})}{2}\, d_{n}(x,y),
\quad x,y \in X_{n}.
\end{aligned}
\end{equation}
where $\operatorname{diam} (X_n):=\sup_{x,y \in X_n}d_n(x,y)$. 
By the compactness of $X_\infty$ and the definition of $f_n$, we get
\begin{align}\label{e:pfkan2}
\operatorname{diam} (X_n)=\sup_{x,y \in X_n}d_n(x,y)\le \sup_{x,y \in X_n}d_\infty\bigl(f_n(x),f_n(y)\bigr)+\epsilon_n\le K
\end{align}
for some positive constant $K$ independent of $n$.
Combining (\ref{e:pfkan1}) and (\ref{e:pfkan2}), $\phi^n-c^n$ is equi-Lipschitz with respect to $d_n$.
Furthermore, by the choice of $z$, we have
$$
|(\phi^n-c^n)(x)|\le \frac{K}{2} d_n(x,z)\le \frac{K^2}{2},\quad x \in X_n.
$$
Thus, $\phi^n-c^n$ is uniformly bounded. 
By the Ascoli--Arzelà Theorem, $(\phi^n-c^n)\circ f^{'}_{n}$ converges uniformly on $X_{\infty}$ to some Lipschitz function $\overline{\phi}$ up to subsequences.
Applying Lemma \ref{important}, we obtain the uniform convergence of 
$Q_{1}(\phi^n - c^n) \circ f^{'}_{n}$ to $Q_{1}(\overline{\phi})$.

Since $\phi^n$ is a maximizer of (\ref{kann}), we have
\begin{equation}\label{e:pfkan3}
\aligned
&\int_{X_n} Q_{1}\bigl(\phi^{n}-c^{n}\bigr)\,d\nu_{n}
      -\int_{X_n} \bigl(\phi^{n}-c^{n}\bigr)\,d\mu_{n}\\
  &=\int_{X_n} Q_{1}(\phi^{n})\,d\nu_{n}
      -\int_{X_n} \phi^{n}\,d\mu_{n}
   =\frac12\,W_{2}(\mu_{n},\nu_{n})^{2}.
\endaligned
\end{equation}

By the triangle inequality, we observe
\begin{equation}\label{e:pfkan4}
\begin{aligned}
\Bigl|
  &\int_{X_n} Q_{1}\bigl(\phi^{n}-c^{n}\bigr)(x)\,d\nu_{n}
  -\int_{X_{\infty}} Q_{1}(\overline{\phi})(x)\,d\nu_{\infty}
\Bigr|\\
&\le
  \int_{X_n}
    \Bigl|
      Q_{1}\bigl(\phi^{n}-c^{n}\bigr)(x)
      -Q_{1}\bigl(\phi^{n}-c^{n}\bigr)\!\bigl(f^{'}_{n}\circ f_{n}(x)\bigr)
    \Bigr|
  \,d\nu_{n} \\
&\quad
  +\Bigl|
    \int_{X_{\infty}}
      Q_{1}\bigl(\phi^{n}-c^{n}\bigr)\!\bigl(f^{'}_{n}(x)\bigr)
      \,d \bigl(f_{n}\bigr)_{\#}\nu_{n}
    -\int_{X_{\infty}}
      Q_{1}\bigl(\phi^{n}-c^{n}\bigr)\!\bigl(f^{'}_{n}(x)\bigr)
      \,d\nu_{\infty}
  \Bigr| \\
&\quad
  +\Bigl|
    \int_{X_{\infty}}
      Q_{1}(\overline{\phi})(x)\,d\nu_{\infty}
    -\int_{X_{\infty}}
      Q_{1}\bigl(\phi^{n}-c^{n}\bigr)\!\bigl(f^{'}_{n}(x)\bigr)
      \,d\nu_{\infty}
  \Bigr|.
\end{aligned}
\end{equation}
Since $Q_{1}(\phi^{n}-c^n)$ is equi-Lipschitz, the first term in the right hand side of (\ref{e:pfkan4}) converges to $0$ as $n \to \infty$.
The second term is estimated above as
\begin{equation*}
\begin{aligned}
&\left|
  \int_{X_\infty}
      Q_{1}\bigl(\phi^{n}-c^{n}\bigr)\bigl(f^{'}_{n}(x)\bigr)\,
      d(f_n)_{\#}\nu_{n}
  - \int_{X_\infty}
      Q_{1}(\overline{\phi})(x)\,
      d(f_n)_{\#}\nu_{n}
\right| \\[4pt]
+{}&\left|
  \int_{X_\infty}
      Q_{1}(\overline{\phi})(x)\,
      d(f_n)_{\#}\nu_{n}
  - \int_{X_\infty}
      Q_{1}\bigl(\phi^{n}-c^{n}\bigr)\bigl(f^{'}_{n}(x)\bigr)\,
      d\nu_{\infty}
\right|.
\end{aligned}
\end{equation*}

Thus, the second term converges to $0$ by the uniform convergence of $Q_{1}(\phi^n - c^n) \circ f^{'}_{n}$.
The third term also tends to zero, again due to the uniform convergence of $Q_{1}(\phi^n - c^n) \circ f^{'}_{n}$.
Consequently, the right side of (\ref{e:pfkan4}) vanishes as $n \to \infty$.

A similar argument applies to the term $\int_{X_{n}}(\phi^{n}-c^n) d\mu_{n}$ in (\ref{e:pfkan3}). Then, by the stability of $2$-Wasserstein distance under measured Gromov--Hausdorff convergence \cite[Theorem 28.6]{V09}, we conclude
$$
\int_{X_{\infty}} Q_{1}(\overline{\phi})\,d\nu_{\infty}-\int_{X_{\infty}}\overline{\phi} \,d\mu_{\infty}=\frac{1}{2}W_2(\mu_\infty,\nu_\infty)^2.
$$
Therefore, $\overline{\phi}$ is a maximizer of (\ref{kani}).
\end{proof}
\section{Acknowledgments}
The author is grateful to Nao Hamamuki and Atsushi Nakayasu for fruitful discussions and insightful suggestions during the preparation of this paper. 
This work was supported by JST SPRING, Grant Number JPMJSP2119.

\end{document}